
\documentclass[letterpaper, 10 pt, conference]{ieeeconf}  

\IEEEoverridecommandlockouts                              

\overrideIEEEmargins                                      
\pdfminorversion=4

\usepackage[dvips]{graphicx}
\usepackage{makeidx}
\usepackage{amsmath}
\usepackage{amsfonts}
\usepackage{amssymb}

\newcommand{\Real}{\mathbb{R}}

\newcommand{\norm}[1]{\left\Vert#1\right\Vert}

\newcommand {\ess}{{\mathrm{ess}}}

\newcommand{\pd}{{\partial}}
\newcommand{\bfA}{A}
\newcommand{\bfB}{b}
\newcommand{\bfC}{C}
\newcommand{\bfP}{P}
\newcommand{\bfQ}{Q}

\newcommand{\phivec}{\phi}

\newtheorem{lem}{Lemma}
\newtheorem{thm}{Theorem}
\newtheorem{assume}{Assumption}
\newtheorem{cor}{Corollary}

\newtheorem{example}{\it Example}



\title{\LARGE \bf  Further Results on  Lyapunov-Like Conditions of Forward Invariance and Boundedness for a Class of Unstable Systems}

\author{Alexander N. Gorban$^{1}$, Ivan Yu. Tyukin$^{2}$, Henk Nijmeijer$^{3}$ 
\thanks{$^{1}$Alexander N. Gorban is with the University of Leicester, Department of Mathematics, Leicester, University Road, LE1 7RH, UK
        {\tt\small ag153@le.ac.uk}}%
\thanks{$^{2}$Ivan Yu. Tyukin is with the University of Leicester, Department of Mathematics, Leicester, LE1 7RH, UK
        and with Saint-Petersburg State Electrotechnical University, Department of Automation and Control Processes, Prof. Popova str. 5, 197376, Russia {\tt\small I.Tyukin@le.ac.uk}  }%
\thanks{$^{3}$Henk Nijmeijer is with Eindhoven University of Technology, Department of Mechanical Engineering, P.O. Box 513 5600 MB, Eindhoven, The Netherlands {\tt\small h.nijmeijer@tue.nl}  }%
}

\begin{document}

\maketitle
\thispagestyle{empty}
\pagestyle{empty}

\begin{abstract}

We provide several characterizations of convergence to unstable equilibria in nonlinear systems. Our current contribution is three-fold. First we present
 simple algebraic conditions for establishing local convergence of non-trivial solutions of nonlinear systems to unstable equilibria. The conditions are based on the earlier work \cite{Tyukin:SIAM:2013} and can be viewed as an extension of the Lyapunov's first method in that they apply to systems in which the corresponding Jacobian has one zero eigenvalue. Second, we show that for a relevant subclass of systems, persistency of excitation of a function of time in the right-hand side of the equations governing dynamics of the system ensure existence of an attractor basin such that solutions passing through this basin in forward time converge to the origin exponentially. Finally we demonstrate that conditions developed in  \cite{Tyukin:SIAM:2013} may be remarkably tight.
%
\end{abstract}

\begin{keywords} Convergence, weakly attracting sets,
Lyapunov functions, Lyapunov's first method
\end{keywords}

\section{Introduction}

Analysis of asymptotic behavior of solutions of nonlinear systems is one of the central pillars of modern control theory. Lyapunov stability \cite{Lyapunov:1892} is an example of such characterizations. The notion of Lyapunov stability and analysis methods that are
based on this notion are proven successful in a wide range of
engineering applications (see e.g. \cite{Nijmeijer_90},
\cite{Isidory}, \cite{Ljung_99}, \cite{Narendra89} is a
non-exhaustive list of references).
The popularity and success of the concept of Lyapunov stability
resides, to a substantial degree, in the convenience and
utility of the method of Lyapunov functions for
assessing asymptotic properties of solutions of ordinary
differential equations. Instead of deriving the solutions
explicitly it suffices to solve an algebraic inequality
involving partial derivatives of a given Lyapunov candidate
function. Yet, as the methods of control expand from purely engineering
applications into a wider area of science, there is a need for
maintaining behavior that fails to obey the usual requirement of
Lyapunov stability.

There are numerous examples of systems
possessing Lyapunov-unstable, yet attracting, invariant sets
\cite{Andronov:1973},  e.g., in the domains of aircraft dynamics and design
of synchronous generators \cite{Bautin:1990} (pp. 313--356). Other examples include
 models of decision-making sequences \cite{Rabinovic_2006}, \cite{Rabinovic_2008}, \cite{NN_template_matching},
flutter suppressors \cite{GOMAN}, the general problem of universal adaptive stabilization \cite{Dyn_Con:Ilchman:97,CDC:Pomet:1992}, and
 problems of adaptive observer design for systems with nonlinear in parameter right-hand side \cite{Tyukin:Automatica:2013}.
Finding rigorous, convenient and at the same time tight criteria for asymptotic
convergence to Lyapunov-unstable invariant sets, however, is a
non-trivial problem.


Criteria
for checking attractivity of unstable point attractors in a
rather general setting have been proposed in
\cite{Rantzer}, and were further developed in
\cite{Masubuchi:2007,Vadia:2008}. These results apply to
systems in which almost all points in a neighborhood of the
attractor correspond to solutions converging to the attractor
asymptotically. However, as Figure \ref{fig:domains}
\begin{figure*}
\begin{center}
\begin{minipage}[h]{0.17\linewidth}
\begin{center}
\includegraphics[width=\textwidth]{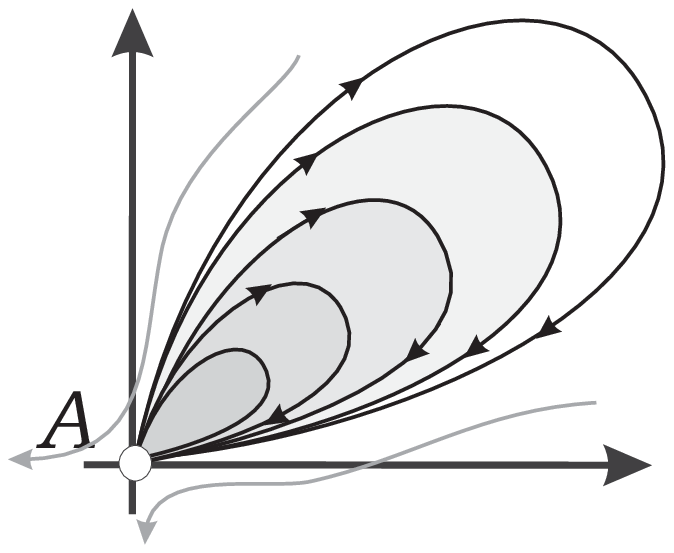}
\vspace{5mm}
a
\end{center}
\end{minipage}
\hspace{5mm}
\begin{minipage}[h]{0.17\linewidth}
\begin{center}
\includegraphics[width=\textwidth]{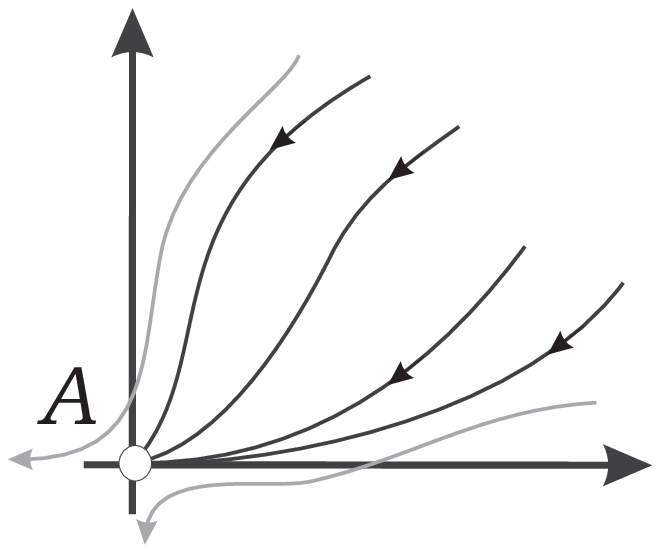}
\vspace{5mm}
b
\end{center}
\end{minipage}
\hspace{5mm}
\begin{minipage}[h]{0.17\linewidth}
\begin{center}
\includegraphics[width=\textwidth]{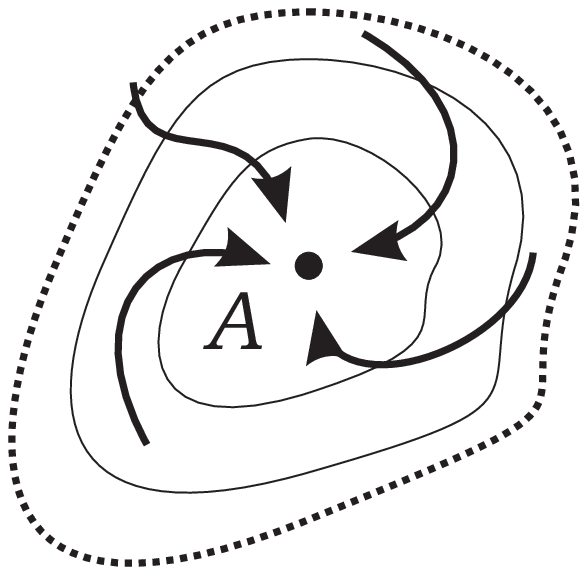}
\vspace{5mm}
c
\end{center}
\end{minipage}
\hspace{5mm}
\begin{minipage}[h]{0.22\linewidth}
\begin{center}
\includegraphics[width=\textwidth]{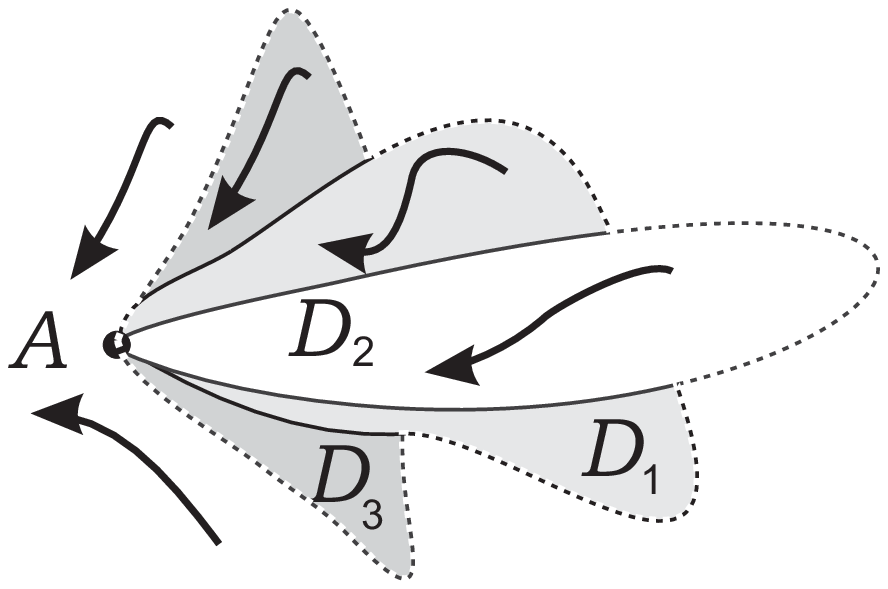}
\vspace{5mm}
d
\end{center}
\end{minipage}
\end{center}
\caption{Panels a,b: a diagrammatic picture of an elliptic sector (panel a) and a parabolic sector (panel b) in the phase space of nonlinear systems \cite{Bautin:1990}. Panel c: a system of neighborhoods corresponding to the level sets of a Lyapunov function of a {\it stable} set $A$. Every neighborhood is forward invariant; all trajectories are contained in the largest neighborhood (marked by dashed line).  Panel d: a system of forward invariant sets of an {\it attracting} set $A$. Trajectories passing through the union of these sets (marked by dashed line) remain there in forward time. }\label{fig:domains}
\end{figure*}
illustrates (panels a,b), there are alternatives that do not comply with these assumptions. On
the other hand techniques which can be used to
address the questions above for equations (\ref{eq:problem}),
such as, e.g.,  \cite{SIAM_non_uniform_attractivity}, lack the convenience of
the method of Lyapunov functions.

Recently,  an approach has been proposed in \cite{Tyukin:SIAM:2013} that enables to extend the method of Lyapunov functions to a class of systems with unstable invariant sets. Standard Lyapunov stability of a set is equivalent to existence of a nested family of neighborhoods, containing this set, which are forward-invariant with respect to dynamics of the system (Fig. \ref{fig:domains}, panel c). And the method of Lyapunov functions is a tool for finding such nested family of neighborhoods. In the approach proposed in \cite{Tyukin:SIAM:2013} families of nested neighborhoods are replaced with  collections of forward invariant sets containing the set of interest (Fig. \ref{fig:domains}, panel d). These sets are not necessarily neighborhoods of the invariant set. Yet, if they exist and at least one of such sets has a non-zero measure, then the original invariant set is clearly weakly attracting in Milnor's sense \cite{Milnor_1985}.

 Utility of proposed in \cite{Tyukin:SIAM:2013} criteria specifying sets of forward-invariance, as well as their non-invariant counterparts, is that the resulting conditions are akin to the ones used in the method of Lyapunov functions and other tangential conditions \cite{SICON:2011:Aubin}, \cite{Nagumo:1942} (see also \cite{Chetaev:1961} for conditions of instability). This offers obvious advantage. On the other hand, few questions remain regarding the approach in \cite{Tyukin:SIAM:2013}, including a) existence of a similar analogue of Lyapunov's first method, b) possibility of unstable yet exponential convergence and corresponding conditions, and c) how tight the derived conditions for boundedness and asymptotic convergence may be?  Answering to these questions is the main goal of this work.

 The paper is organized as follows. Main notational agreements and conventions are provided in Section \ref{sec:notation}, Section \ref{sec:preliminaries} presents general class of systems considered in \cite{Tyukin:SIAM:2013}, main assumptions and one illustrative theoretical result. In Section \ref{sec:main} we show how these results can be used to extend the first method of Lyapunov. Furthermore, for a subclass of systems relevant in problems of adaptive observers design, we provide conditions ensuring that not only that the concerned equilibrium is a weak attractor but also that the convergence to this attractor is exponential. Finally, we demonstrate that boundedness and attractivity conditions that the approach from  \cite{Tyukin:SIAM:2013} provides may sometimes be necessary too. Section \ref{sec:conclusion} concludes the paper.

\section{Notation}\label{sec:notation}

The following notational conventions are used throughout the paper:
\begin{itemize}
\item $\Real$ denotes the set of real numbers, $\Real_{>a} = \{x\in\Real\ |\ x>a\}$, and  $\Real_{\geq a} = \{x\in\Real\ |\ x\geq a\}$;
\item the Euclidean norm of $x \in \Real^n$ is denoted by $\norm{x}$, $\norm{x}^2 = x^T x$, where ${}^T$ stands for transposition;
      \item the space of $n\times n$ matrices with real entries is denoted by $\Real^{n\times n}$; let  $P\in\Real^{n\times n}$, then $P>0$ ($P\geq 0$) indicates that $P$ is symmetric and positive (semi-)definite; $I_{n}$ denotes  the $n\times n$ identity matrix;
    \item let $\Gamma\in\Real^{n\times n}$, $\Gamma>0$, and $x\in\Real$, then $\|x\|_{\Gamma^{-1}}^2$ denotes $x^{T}\Gamma^{-1}x$;
    \item  by ${L}^n_\infty[t_0,T]$, $t_0\in\Real$, $T\in\Real, \ T\geq t_0$ we denote the space of all functions $xf:[t_0,T]\rightarrow\Real^n$ such that $\|f\|_{\infty,[t_0,T]}=\ess \sup\{\|f(t)\|,t \in [t_0,T]\}<\infty$; $\|f\|_{\infty,[t_0,T]}$ stands for the ${L}^n_\infty[t_0,T]$ norm of $f(\cdot)$; if the function $f$ is defined  on a set larger than $[t_0,T]$ then notation $\|f\|_{\infty,[t_0,T]}$ applies to the restriction of $f$ on $[t_0,T]$;
    \item $\mathcal{C}^{r}$ denotes the space of continuous functions that are at least $r$ times differentiable;
    \item the symbol $\mathcal{K}_0$ denotes   the set of all non-decreasing
continuous functions $\kappa: \Real_{\geq 0}\rightarrow
\Real_{\geq 0}$ such that $\kappa(0)=0$; $\mathcal{K} \subset \mathcal{K}_0$
is the subset of strictly increasing functions, and $\mathcal{K}_\infty \subset \mathcal{K}$
consists of functions from $\mathcal{K}$ with infinite limit: $\lim_{s\rightarrow\infty}\kappa(s)=\infty$.
\end{itemize}

\section{Preliminaries}\label{sec:preliminaries}

Consider system (\ref{eq:problem})
\begin{equation}\label{eq:problem}
\begin{split}
\dot{x}&=f(x,\lambda,t),\\
\dot{\lambda}&=g(x,\lambda,t),
\end{split}
\end{equation}
where the vector-fields $f: \
\Real^n\times\Real\times\Real\rightarrow\Real^n$, $g: \
\Real^n\times\Real\times\Real\rightarrow\Real$ are continuous
and locally Lipschitz w.r.t. $x$, $\lambda$ uniformly in $t$.
The point $x=0,
\lambda=0$ is assumed to be an equilibrium of (\ref{eq:problem}).

Let  $\mathcal{D}$ be an open subset of $\Real^n$  and
${\Lambda}=[c_1,c_2], \ c_1\leq 0, \ c_2 > 0$, be
an interval. Suppose that the closure
$\overline{\mathcal{D}}$ of $\mathcal{D}$ contains the
origin, and denote
$\mathcal{D}_{\Omega}=\overline{\mathcal{D}}\times{\Lambda}\times\Real$.
Finally,  we suppose that the right-hand side of
(\ref{eq:problem}) satisfies  Assumptions \ref{assume:stable},
\ref{assume:unstable} below.
\begin{assume}\label{assume:stable}There exists a function $V:\Real^n\rightarrow \Real_{\geq 0}$,  $V\in\mathcal{C}^{0}$, differentiable everywhere except possibly at the origin, and five functions of one variable, $\underline{\alpha},\bar{\alpha}\in\mathcal{K}_{\infty}$, $\alpha:\Real_{\geq 0}\rightarrow\Real$,
$\alpha\in\mathcal{C}^{0}([0,\infty))$, $\alpha(0)=0$, $\beta: \ \Real_{\geq 0}\rightarrow\Real_{\geq 0}$, $\beta\in\mathcal{C}^{0}([0,\infty))$, $\varphi\in\mathcal{K}_0$
such that  for every   $(x,\lambda,t)\in{(\overline{\mathcal{D}}\setminus\{0\})\times\Lambda\times\Real}$  the following properties hold:
\begin{equation}\label{eq:stable}
\begin{split}
\underline{\alpha}(\|x\|)\leq V(x) \leq \bar{\alpha}(\|x\|),\\
\frac{\pd V}{\pd x}f(x,\lambda,t)\leq  \alpha(V(x)) + \beta(V(x))\varphi(|\lambda|).
 \end{split}
\end{equation}
\end{assume}
\begin{assume}\label{assume:unstable}  There exist functions $\delta, \xi \in\mathcal{K}_0$ such that the following inequality holds for all $(x,\lambda,t)\in{\mathcal{D}_{\Omega}}$:
\begin{equation}\label{eq:g:Lipschitz:monotone:1}
\begin{split}
- \xi (|\lambda|) - \delta(\|x\|) &\leq  g(x,\lambda,t)\leq 0.
\end{split}
\end{equation}
\end{assume}

The following is an example of a Lyapunov-like condition for establishing whether the origin of (\ref{eq:problem}).
\begin{cor}[\cite{Tyukin:SIAM:2013}]\label{cor:boundedness_unstable} Consider  system (\ref{eq:problem}), and let $\mathcal{D}=\Real^n$, $\Lambda=\Real$. Suppose that Assumptions \ref{assume:stable}, \ref{assume:unstable} hold, there exists  a function
$\psi: \ \psi\in\mathcal{K}\cap\mathcal{C}^{1}((0,\infty))$ and a
positive constant $a\in\Real_{>0}$ such that for all $V\in(0,a]$
\begin{equation}\label{eq:positive:invariance:attracting}
\begin{split}
\frac{\pd \psi(V)}{\pd V}\left[\alpha(V)+\beta(V)\varphi(\psi(V))\right]\\
+\delta\left(\underline{\alpha}^{-1}(V)\right)+\xi\left(\psi(V)\right)\leq 0,
\end{split}
\end{equation}

Then
\begin{itemize}
\item[(a) ] the set
\begin{equation}\label{eq:positive:invariance:domain:cor}
\begin{split}
\Omega_a=&\{(x,\lambda) \ | \  x\in\Real^n, \ \lambda\in\Real_{\geq 0},\\
 & \psi(a)\geq\lambda\geq\psi(V(x)), \ V(x)\in[0,a]\}
\end{split}
\end{equation}
is forward  invariant.
 \end{itemize}
 Furthermore, for every solution of (\ref{eq:problem}) starting in $\Omega_a$
\begin{itemize}
\item[(b) ] there exists a limit
\begin{equation}\label{eq:positive:invariance:limit:1}
  \lim_{t\rightarrow\infty}\lambda(t)=\lambda', \;\; \lambda'\in [0,\psi(a)] .  \nonumber
\end{equation}
\item[(c)] If, in addition, the function $g(x,\lambda,
    \cdot )$ is uniformly continuous then:
\begin{equation}\label{eq:positive:invariance:limit:1.5}
\lim_{t\rightarrow\infty}g(x(t),\lambda',t)=0.  \nonumber
\end{equation}
\end{itemize}
\end{cor}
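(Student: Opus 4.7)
The plan is to treat parts (a), (b), (c) in sequence using, respectively, a Nagumo-type tangential invariance argument, monotone convergence, and a Barbalat-style lemma.

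For part (a) note first that $\Omega_a$ is closed and bounded, since $V(x)\le a$ combined with $\underline\alpha(\|x\|)\le V(x)$ yields $\|x\|\le\underline\alpha^{-1}(a)$, while $\lambda\in[0,\psi(a)]$. The boundary of $\Omega_a$ splits into three pieces: the \emph{cap} $\{\lambda=\psi(a)\}$, the \emph{face} $\{V(x)=a\}$, and the \emph{diagonal} $\{\lambda=\psi(V(x))\}$. Since $\dot\lambda=g(x,\lambda,t)\le 0$ by Assumption \ref{assume:unstable}, the cap cannot be crossed outward. For the diagonal, introduce $W(x,\lambda)=\lambda-\psi(V(x))$ and compute, wherever $V(x)\in(0,a]$,
\begin{align*}
\dot W &= g(x,\lambda,t)-\psi'(V(x))\,\frac{\pd V}{\pd x}f(x,\lambda,t)\\
       &\ge -\xi(|\lambda|)-\delta(\|x\|)-\psi'(V)\bigl[\alpha(V)+\beta(V)\varphi(|\lambda|)\bigr],
\end{align*}
using Assumption \ref{assume:stable}, Assumption \ref{assume:unstable}, and $\psi'\ge 0$ (from $\psi\in\mathcal{K}$). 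On the diagonal, substitute $\lambda=\psi(V)$ and bound $\delta(\|x\|)\le\delta(\underline\alpha^{-1}(V(x)))$ via $\underline\alpha(\|x\|)\le V(x)$; condition (\ref{eq:positive:invariance:attracting}) then immediately gives $\dot W\ge 0$, so trajectories do not cross the diagonal outward. On the face $V(x)=a$, the constraint $\psi(a)\ge\lambda\ge\psi(V(x))=\psi(a)$ forces $\lambda=\psi(a)$, and (\ref{eq:positive:invariance:attracting}) specialized at $V=a$ (where $\psi'(a)>0$) gives $\dot V\le 0$. Nagumo's tangential invariance theorem then yields forward invariance of $\Omega_a$.

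For part (b), forward invariance of $\Omega_a$ keeps $\lambda(t)\in[0,\psi(a)]$, while $\dot\lambda=g\le 0$ makes $\lambda(\cdot)$ non-increasing. Monotone convergence therefore supplies $\lambda'=\lim_{t\to\infty}\lambda(t)\in[0,\psi(a)]$. For part (c), observe that $\int_0^\infty|g(x(s),\lambda(s),s)|\,ds=\lambda(0)-\lambda'<\infty$. Since $\Omega_a$ is compact and $f,g$ are continuous, $\dot x,\dot\lambda$ are bounded along the trajectory, so $t\mapsto(x(t),\lambda(t))$ is Lipschitz; combined with uniform continuity of $g(x,\lambda,\cdot)$ and local Lipschitzness of $g$ in $(x,\lambda)$, the map $t\mapsto g(x(t),\lambda(t),t)$ is uniformly continuous. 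Barbalat's lemma gives $g(x(t),\lambda(t),t)\to 0$, and local Lipschitzness of $g$ in $\lambda$ yields
\[
|g(x(t),\lambda(t),t)-g(x(t),\lambda',t)|\le L\,|\lambda(t)-\lambda'|\to 0,
\]
so $g(x(t),\lambda',t)\to 0$.

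The main obstacle I anticipate is in part (a): carefully verifying the tangential inequality on the diagonal at points where $V(x)$ is small, because $V$ need not be differentiable at the origin and $\psi'(V)$ may blow up or vanish as $V\downarrow 0$. One needs either to argue that trajectories remain uniformly bounded away from the origin on finite time intervals (so the estimate on $\dot W$ is evaluated only where $V(x)>0$), or to invoke a more robust invariance criterion that tolerates the irregularity of $\psi\circ V$ at the equilibrium. The monotone-convergence and Barbalat steps in (b) and (c) are routine once (a) is established.
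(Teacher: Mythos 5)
The paper does not actually prove this corollary; it is imported verbatim from \cite{Tyukin:SIAM:2013}. Your argument is nonetheless the intended one: condition (\ref{eq:positive:invariance:attracting}) is precisely the subtangentiality inequality $\dot W\ge 0$ for $W=\lambda-\psi(V(x))$ on the surface $\lambda=\psi(V(x))$, obtained exactly as you compute it (bounding $\delta(\|x\|)$ by $\delta(\underline\alpha^{-1}(V))$ and substituting $\lambda=\psi(V)$ into $\xi$ and $\varphi$), combined with monotonicity of $\lambda$ for the cap, and parts (b), (c) are the monotone-convergence and Barbalat steps you describe. Two loose ends worth tightening: your treatment of the face $\{V(x)=a\}$ silently assumes $\psi'(a)>0$, but that face is in fact redundant --- since $\dot\lambda\le 0$ keeps $\lambda(t)\le\psi(a)$ and the diagonal keeps $\lambda(t)\ge\psi(V(x(t)))$, strict monotonicity of $\psi$ already forces $V(x(t))\le a$; and in part (c) the uniform continuity of $t\mapsto g(x(t),\lambda(t),t)$ requires $\dot x$ bounded along the trajectory, which does not follow from the stated hypotheses alone (Assumption \ref{assume:stable} bounds $\dot V$ from above, not $\|f\|$), so some additional boundedness of $f$ on $\Omega_a\times[t_0,\infty)$ must be invoked. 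The obstacle you flag near $V=0$ is real but harmless here, since points with $x=0$, $\lambda>0$ lie off the diagonal boundary and the condition is only needed for $V\in(0,a]$.
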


In the next section we show how this and other results from \cite{Tyukin:SIAM:2013} can be used in extending classical Lyapunov's first method. Furthermore, we will establish conditions ensuring exponential convergence of the solutions to the origin and show that sometimes these results may enable to derive necessary and sufficient conditions for existence of weak attractors in (\ref{eq:problem}).

\section{Main Results}\label{sec:main}

\subsection{Extension of Lyapunov's First Method}

Consider system
\begin{equation}\label{eq:system}
\begin{split}
\dot{z}&=p(z)
\end{split}
\end{equation}
where the vector-field $p: \
\Real^m\rightarrow\Real^m$ is continuous
and locally Lipschitz w.r.t. $z$. Furthermore, let it be differentiable at the origin, $p(0)=0$, and
\[
J=\frac{\pd p}{\pd z} (0)
\]
be the corresponding Jacobian matrix. Finally, let
\[
\sigma_1,\sigma_2,\dots ,\sigma_m
\]
be the eigenvalues of $J$ with $\sigma_1=0$ and real parts of all other eigenvalues be negative. In what follows we are interested in finding a set of simple Lyapunov-like conditions that would enable us to establish whether the origin is a weak attractor or not.

Without loss of generality, consider dynamics of (\ref{eq:system}) in the coordinates
\[
\left(\begin{array}{c}
       x \\
       \lambda
      \end{array}\right)=T z,
\]
$x\in\Real^n$, $\lambda\in\Real$, $m=n+1$, where the $m\times m$ non-singular matrix $T$ is such that the structure of $T J T^{-1}$ is as follows
\[
T J T^{-1}=\left(\begin{array}{cc}A & b\\ 0 & 0\end{array}\right), \ A\in\Real^{n\times n}, \ b\in\Real^{n}.
\]
It is clear that such matrix $T$ will always exist and that  $\sigma_2,\dots,\sigma_m$ are the eigenvalues of $A$.

Dynamics of (\ref{eq:system}) in the coordinates $(x,\lambda)$ is
\begin{equation}\label{eq:system:2}
\begin{split}
\dot{x}&=f(x,\lambda)\\
\dot{\lambda}&=g(x,\lambda),
\end{split}
\end{equation}
with
\[
\begin{split}
\frac{\pd f}{\pd x}(0,0)=A, \ \frac{\pd f}{\pd \lambda}(0,0)=b, \\
\frac{\pd g}{\pd x}(0,0)=0, \ \frac{\pd g}{\pd \lambda}(0,0)=0.
\end{split}
\]
The following can now be formulated for (\ref{eq:system:2})

\begin{thm} Consider system (\ref{eq:system:2}), and let the function $g$ be differentiable at least twice. Let $G$ be the Hessian of $g$ at the origin and
let $G$ be sign-definite.

Then $(0,0)$ is a weak (Milnor) attractor for (\ref{eq:system:2}).
\end{thm}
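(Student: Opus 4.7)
The plan is to apply Corollary~\ref{cor:boundedness_unstable} on a small neighborhood of the origin after a suitable cutoff, using a quadratic Lyapunov function for the linearized $x$-dynamics, a negative-definite quadratic lower bound on $g$, and an explicit power-law choice of $\psi$. Sign-definiteness of $G$ lets us replace $\lambda$ by $-\lambda$ if necessary (this flips $b$ but preserves Hurwitzness of $A$), so we may assume $G$ is negative definite. Then there is a neighborhood $\mathcal{U}$ of the origin and constants $\mu,M>0$ with
$$ -M(\|x\|^2+\lambda^2)\ \le\ g(x,\lambda)\ \le\ -\mu(\|x\|^2+\lambda^2)\ \le\ 0 $$
on $\mathcal{U}$; in particular $g$ vanishes only at the origin in $\mathcal{U}$, and $\dot\lambda\le 0$ on $\mathcal{U}$.

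Since $A$ is Hurwitz I would fix $P,Q>0$ with $A^T P+PA=-Q$ and set $V(x)=x^T P x$. Using $f(x,\lambda)=Ax+b\lambda+O(\|x\|^2+\lambda^2)$ a direct estimate gives, on $\mathcal{U}$,
$$ \frac{\partial V}{\partial x} f(x,\lambda) \ \le\ -c_1 V + c_2 \sqrt{V}\,|\lambda| + c_3 V^{3/2} + c_4 \sqrt{V}\,\lambda^2 $$
for positive constants $c_i$. This fits Assumption~\ref{assume:stable} with $\alpha(V)=-c_1 V+c_3 V^{3/2}$, $\beta(V)=\sqrt{V}$, $\varphi(s)=c_2 s+c_4 s^2$, $\underline{\alpha}(s)=\lambda_{\min}(P)s^2$, $\bar{\alpha}(s)=\lambda_{\max}(P)s^2$; the bound on $g$ fits Assumption~\ref{assume:unstable} with $\xi(s)=\delta(s)=M s^2$.

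To pick $\psi$ I would try $\psi(V)=k V^{3/4}$. Substituting into (\ref{eq:positive:invariance:attracting}) and using $\underline{\alpha}^{-1}(V)\le c_5\sqrt{V}$, the leading-order term as $V\to 0^+$ is $-\tfrac{3}{4}c_1 k V^{3/4}$, while all remaining contributions are of order $V$, $V^{5/4}$, or $V^{3/2}$, hence $o(V^{3/4})$. So (\ref{eq:positive:invariance:attracting}) holds on $(0,a]$ for some $a>0$ and any fixed $k>0$. Modifying $f$ and $g$ outside $\mathcal{U}$ by a smooth cutoff extends the estimates to all of $\Real^n\times\Real$ without altering trajectories that remain in $\mathcal{U}$; since $\Omega_a\subset\mathcal{U}$ for $a$ small, Corollary~\ref{cor:boundedness_unstable} then applies. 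Part (a) yields forward invariance of $\Omega_a$, which has nonempty interior and hence positive Lebesgue measure. Part (b) gives $\lambda(t)\to\lambda'\in[0,\psi(a)]$; since $g$ is time-independent and hence uniformly continuous in $t$, part (c) gives $g(x(t),\lambda')\to 0$. Boundedness of $x(t)$ and continuity of $g$ then force every limit point $x^\ast$ of $x(t)$ to satisfy $g(x^\ast,\lambda')=0$, but by the first paragraph $g$ is strictly negative on $(\mathcal{U}\setminus\{0\})\cap\overline{\Omega_a}$, so $(x^\ast,\lambda')=(0,0)$. Hence $x(t)\to 0$ and $\lambda'=0$, every trajectory through $\Omega_a$ converges to the origin, and $(0,0)$ is a weak Milnor attractor.

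The main obstacle is choosing $\psi$ so that (\ref{eq:positive:invariance:attracting}) survives near $V=0$: the exponent $3/4$ in $\psi(V)=k V^{3/4}$ is dictated by the race between the linear decay $-c_1 k p V^p$ produced by $\alpha$ and the $V^{1}$-terms coming from both $\delta(\underline{\alpha}^{-1}(V))$ (because $V$ is quadratic in $\|x\|$) and $\psi'(V)\beta(V)\varphi(\psi(V))$. Any $p\in(1/2,1)$ works, but outside this range either the $\delta$- or the $\xi$-contribution overtakes the decay and the inequality fails. A secondary nuisance is that Corollary~\ref{cor:boundedness_unstable} is stated globally, which the cutoff resolves cleanly because the relevant trajectories are trapped in $\Omega_a$.
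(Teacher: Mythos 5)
Your proposal is correct and follows essentially the same route as the paper: reduce to $G<0$, take $V=x^{T}Px$ for the Hurwitz block, trap $\dot\lambda$ between $-M(\|x\|^{2}+\lambda^{2})$ and $0$, and verify the forward-invariance inequality with a power-law $\psi$ to obtain a positive-measure invariant wedge $\Omega_a$ on which trajectories converge to the origin. The only cosmetic differences are your choice $\psi(V)=kV^{3/4}$ together with a cutoff so that the global Corollary~\ref{cor:boundedness_unstable} applies (the paper uses $\psi(V)=k\sqrt{V}$ with $k$ small and a local lemma from \cite{Tyukin:SIAM:2013}), plus the minor point that mere differentiability of $f$ at the origin gives a remainder $o(\|(x,\lambda)\|)$ rather than $O(\|x\|^{2}+\lambda^{2})$ --- which, once absorbed into the linear terms as the paper does, only simplifies your estimate (and, contrary to your closing remark, also lets the exponent $1/2$ work for small $k$).
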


\begin{proof} Consider dynamics of (\ref{eq:system:2}) in a vicinity of the origin:
\[
\begin{split}
\dot{x}&=Ax + b\lambda + o(\|(x,\lambda)\|)\\
\dot{\lambda}&= \frac{1}{2}(x,\lambda)^{T}G(x,\lambda)(x,\lambda)+o(\|(x,\lambda)\|^2).
\end{split}
\]
Since real parts of the eigenvalues of $A$ are negative, there are symmetric positive-definite matrices $H$, $Q$ such that
\[
HA + A^TH \leq - Q.
\]
Without loss of generality suppose that $G<0$ (if $G>0$ we can replace $\lambda$ with $\tilde\lambda=-\lambda$ to get a system representation as in (\ref{eq:system:2}) but with $G<0$).

Let $V=x^{T}Hx$ and consider the function $\psi(V)=k \sqrt{V}$, $k>0$. It is clear that there exist $\alpha>0$, $\beta>0$, and $c_1>0$, independent on $k$, such that
\[
\dot{V}\leq -\alpha V  + \beta \sqrt{V}|\lambda| + c_1 \sqrt{V}|o(\|(x,\lambda)\|)|.
\]
Furthermore, for any given $\varepsilon>0$ there is a neighborhood $\Omega_1$ of the origin:
\[
|o(\|(x,\lambda)\|)| \leq  \varepsilon (\sqrt{V(x)}+|\lambda|) \ \mathrm{for \ all} \ (x,\lambda)\in\Omega_1.
\]
Hence there exists a neighborhood $\Omega_2$ of the origin and  $\alpha_1>0$, $\beta_1>0$, independent on $k$:
\[
\dot{V}\leq -\alpha_1 V(x)  + \beta_1 \sqrt{V(x)}|\lambda| \ \mathrm{for \ all} \ (x,\lambda)\in\Omega_2.
\]
Similarly, there is a neighborhood $\Omega_3$ containing the origin and a constant $\gamma>0$ such that
\[
- \gamma (V(x) + \lambda^2) \leq \dot{\lambda}\leq 0 \ \mathrm{for \ all} \ (x,\lambda)\in\Omega_3.
\]

According to \cite{Tyukin:SIAM:2013} (Lemma 1) existence of an interval $(0,a]$, $a>0$ such that
\begin{equation}\label{eq:boundedness:cond}
\begin{split}
\frac{\pd \psi}{\pd V} (-\alpha_1 V  + \beta_1 \sqrt{V} \psi(V)) +\gamma V + \gamma\psi^2(V) \leq 0
\end{split}
\end{equation}
would imply that the intersection of the corresponding sets $\Omega_a$ and $\Omega_2\cap \Omega_3$ is forward-invariant. Substituting $\psi(V)=k\sqrt{V}$ into the left-hand side of the expression above one obtains
\[
\begin{split}
&-k\frac{\alpha_1}{2}\sqrt{V} + k^2 \frac{\beta_1\sqrt{V}}{2} + \gamma (V+k^2 V)=\\
&\sqrt{V}(-k\frac{\alpha_1}{2}+k^2\frac{\beta_1}{2}+\gamma (1+k^2)\sqrt{V}).
\end{split}
\]
It is therefore clear that  (\ref{eq:boundedness:cond}) holds for all
\[
{V}\leq a =  \left[\left(k\frac{\alpha_1}{2}-k^2\frac{\beta_1}{2}\right)/(\gamma (1+k^2))\right]^2.
\]
The measure of the set $\Omega_a$ ($a\neq 0$) is non-zero. Moreover, since $G$ is sign-definite,
\[
\lim_{t\rightarrow\infty}x(t)=0, \ \lim_{t\rightarrow\infty}\lambda(t)=0.
\]
Hence $(0,0)$ is a weak attractor.
\end{proof}

\subsection{Exponential Convergence}

Consider a subclass of (\ref{eq:problem})
that is relevant in the problem of adaptive observer design \cite{Tyukin:Automatica:2013}:
\begin{equation}\label{eq:system:observer}
\begin{split}
\left(\begin{array}{c}
        \dot{x}_1\\
        \dot{x}_2
        \end{array}
\right)
        &= \left(\begin{array}{cc} A & b\varphi^T(t)\\
                                  -\Gamma \varphi(t)C^T & 0
                                  \end{array}\right)  \left(\begin{array}{c}
        {x}_1\\
        {x}_2
        \end{array} \right) \\
        & + \left(\begin{array}{c}b\\ 0\end{array}\right)v(t,\lambda)\\
\dot{\lambda}&=-\gamma |C^T x_1|, \\
& x=(x_1,x_2), \ x(t_0)=x_0, \ \lambda(t_0)=\lambda_0,
\end{split}
\end{equation}
where $x_1\in\Real^k$, $x_2\in\Real^m$, $\lambda\in\Real$ are state variables ($k+m=n$), $x_0\in\Real^n$, $\lambda_0>0$ are initial conditions, $\Gamma\in\Real^{m\times m}$ is a positive-definite symmetric matrix; matrix $\bfA\in\Real^{k\times k}$, and vectors $\bfB$, $\bfC$ are
supposed to
satisfy
\begin{equation}\label{eq:condition_MKY}
         \left\{\begin{split}
         &\bfP \bfA + \bfA^T \bfP\leq - \bfQ \\
         &\bfP\bfB = \bfC.
         \end{split}\right.
\end{equation}
for some  symmetric positive definite
matrices $\bfP$, $\bfQ$. Functions $\varphi:\Real\rightarrow\Real^{m}$, $v:\Real\times\Real\rightarrow\Real$ are continuous and differentiable with bounded derivatives. Furthermore, $v(t,0)=0$ for all $t$.

It has been shown in  \cite{Tyukin:Automatica:2013}  that if the function $\varphi$ is persistently exciting, bounded and with bounded derivative then  there exists an interval $(0,\gamma^\ast(x_0,\lambda_0)]$, $\gamma^\ast(x_0,\lambda_0)>0$, such that for all constant $\gamma$ taken from this interval solutions of (\ref{eq:system:observer}) are bounded and
$0<\lambda(t)<\lambda_0 \ \forall \  t\geq t_0$.
Furthermore, $\lim_{t\rightarrow \infty}\lambda(t)=0$ and $\lim_{t\rightarrow\infty}x(t)=0$. One can arrive at the same conclusion using e.g. Corollary 1 or Lemma 1 from \cite{Tyukin:SIAM:2013}. The question, however, is if such convergence can be made exponential. An answer to this question is provided below.

\begin{thm} Consider system (\ref{eq:system:observer}) with $A,b,C$ satisfying (\ref{eq:condition_MKY}). Let us suppose that parameter $\gamma$  and initial conditions are chosen so that solutions of (\ref{eq:system:observer}) are bounded. Furthermore, let the vector
\[
\phivec(t)=\left(\varphi(t),\left.\int_{0}^{1} \frac{\pd v(t,a)}{\pd a}\right|_{a=\lambda(t) s } ds\right)
\]
be persistently exciting
\[
\exists \ T,\mu>0: \   \int_{t}^{t+T}\phi(\tau)\phi(\tau)^{T}d\tau\geq \mu I_{m+1} \ \forall t\geq t_0,
\]
and there is an $M_\phi>0$: $\max\{\|{\phivec}(t)\|,\|\dot{\phivec}(t)\|\}<M_{\phi}$ for all $t\geq t_0$. Then $x(t)$, $\lambda(t)$ converge to the origin exponentially fast.
\end{thm}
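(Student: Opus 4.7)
The plan is to rewrite (\ref{eq:system:observer}) as a standard SPR-based adaptive observer driven by the regressor $\phivec$, to establish $L^{1}$-integrability of $C^{T}x_{1}$ and boundedness of $(x_{1},x_{2})$ via a Lyapunov argument, and then to upgrade to exponential decay using persistency of excitation. Since $v(t,0)=0$ and $v$ is $\mathcal{C}^{1}$ in its second argument, the fundamental theorem of calculus gives
\[
v(t,\lambda)=\lambda\int_{0}^{1}\frac{\pd v}{\pd a}(t,s\lambda)\,ds;
\]
evaluated along any trajectory the prefactor is exactly the second block of $\phivec(t)$ in the theorem statement. Introducing $z=(x_{2}^{T},\lambda)^{T}$ and $\tilde{\Gamma}=\mathrm{diag}(\Gamma,\gamma)$, I can therefore rewrite the dynamics as $\dot{x}_{1}=Ax_{1}+b\,\phivec(t)^{T}z$ and $\dot{z}=-\tilde{\Gamma}\,\phivec(t)\,C^{T}x_{1}+r(t)$, where the residual $r(t)$ is supported on the last coordinate and captures the mismatch between $-\gamma|C^{T}x_{1}|$ and $-\gamma\,\phivec_{2}\,C^{T}x_{1}$.

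Next, take $W=x_{1}^{T}Px_{1}+x_{2}^{T}\Gamma^{-1}x_{2}$. The relations (\ref{eq:condition_MKY}), together with cancellation of the cross terms $2x_{1}^{T}C\varphi^{T}x_{2}$ and $-2x_{2}^{T}\varphi\,C^{T}x_{1}$, yield
\[
\dot{W}\leq -x_{1}^{T}Qx_{1}+2(C^{T}x_{1})\,v(t,\lambda).
\]
Because $\dot{\lambda}\leq 0$ and $\lambda$ is bounded below (by $0$, as I would infer from the standing boundedness assumption), $\lambda(t)$ converges monotonically to some $\lambda^{\ast}\geq 0$ and $\int_{t_{0}}^{\infty}|C^{T}x_{1}(\tau)|\,d\tau=(\lambda_{0}-\lambda^{\ast})/\gamma<\infty$. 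Combined with boundedness of $v(t,\lambda(t))$ this renders $x_{1}\in L^{2}$; Barbalat's lemma, using boundedness of $\dot{x}_{1}$, then yields $C^{T}x_{1}(t)\to 0$ and hence $r(t)\to 0$.

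Finally, I would invoke a classical persistency-of-excitation result (for instance Anderson's lemma, or the Morgan--Narendra theorem) for the nominal linear time-varying system $\dot{x}_{1}=Ax_{1}+b\phivec^{T}z$, $\dot{z}=-\tilde{\Gamma}\phivec\,C^{T}x_{1}$. Its hypotheses---strict passivity via (\ref{eq:condition_MKY}), boundedness of $\phivec$ and $\dot{\phivec}$, and persistent excitation of $\phivec$---are exactly those assumed in the theorem and deliver exponential stability of this nominal system. A perturbation estimate exploiting $C^{T}x_{1}\in L^{1}$ and the decay of $r(t)$ then transfers exponential decay to the trajectories of (\ref{eq:system:observer}); in particular $\lambda^{\ast}=0$, so both $x(t)$ and $\lambda(t)$ converge to zero exponentially. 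The main obstacle is precisely this last transfer step: because $\dot{\lambda}=-\gamma|C^{T}x_{1}|$ is not linear in $C^{T}x_{1}$, the residual $r(t)$ is not small a priori and off-the-shelf PE-based LTV exponential stability theorems do not directly apply; bridging the gap---for instance by constructing a Matrosov-type auxiliary function for the perturbed system directly, or by bootstrapping the $L^{1}$-integrability of $|C^{T}x_{1}|$ into a contraction strong enough to dominate $r(t)$---is where the bulk of the technical effort will lie.
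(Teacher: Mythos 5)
Your proposal assembles the right ingredients---the integral representation $v(t,\lambda)=\lambda\int_0^1 \frac{\pd v}{\pd a}(t,s\lambda)\,ds$, the augmented regressor $\phivec$, and persistency of excitation---but it stops exactly where the work has to be done, and the gap you flag at the end is genuine. The paper closes it with two ideas you are missing. First, the exponential rate is extracted not from a classical PE/LTV stability theorem plus a perturbation argument, but from a purely normed statement (Lemma~\ref{lem:exp}, a variant of Lemma~3 in \cite{Lorea_2002}): if $\max\{\|x\|_{2,[t,\infty)},\|x\|_{\infty,[t,\infty)}\}\leq c\|x(t)\|$ with $c$ \emph{independent of $t$}, then $\|x(t)\|$ decays exponentially. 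What must therefore be proved is not qualitative membership $x_1\in L^2$ and $C^Tx_1\to 0$ (which is all your Barbalat step delivers, and which yields only asymptotic convergence), but uniform tail bounds $\|x_1\|_{2,[t,\infty)}+\|q\|_{2,[t,\infty)}\leq c\,(\|x_1(t)\|+\|q(t)\|)$ valid for every $t$, where $q=(x_2,\lambda)$. Your Lyapunov function $W=x_1^TPx_1+x_2^T\Gamma^{-1}x_2$ leaves the sign-indefinite cross term $2(C^Tx_1)v(t,\lambda)$, which you then control only through the $L^1$ bound on $|C^Tx_1|$; the paper instead augments $W$ with $\frac{D_v}{\gamma}\lambda^2$, $D_v=\max|\pd v/\pd\lambda|$, whose derivative $-2D_v\lambda|C^Tx_1|$ dominates that cross term pointwise (since $|v(t,\lambda)|\leq D_v\lambda$ and $\lambda\geq 0$), giving the clean uniform dissipation $\dot V\leq -x_1^TQx_1$ and hence the required $L^\infty$ and $L^2$ tail bounds C1--C3 with $t$-independent constants.

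Second, the ``residual'' $r(t)$ you worry about is not an obstacle once the swapping step of \cite{Lorea_2002} is set up correctly: with $z=q-\phi(t)b^Tx_1$ one obtains $\dot z=-\phi(t) b^Tb\,\phi(t)^Tz+\chi(t,x_1)$, where $\chi$ collects \emph{everything} driven by $x_1$, including the nonlinear term $-\gamma|C^Tx_1|$ coming from $\dot\lambda$, and satisfies $\|\chi(t,x_1)\|\leq M\|x_1\|$ simply because $|C^Tx_1|\leq\|C\|\|x_1\|$. Since $\phivec$ is persistently exciting, the homogeneous $z$-system is exponentially stable, so $\|z\|_{2,[t,\infty)}\leq\beta_1\|z(t)\|+\beta_2\|x_1\|_{2,[t,\infty)}$, and the uniform $L^2$ bound on $x_1$ from the first step transfers to $q$ (condition C4). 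No decay of a residual, no Matrosov construction, and no bootstrapping of the $L^1$ bound are needed: the absolute value is absorbed linearly into a perturbation that is already controlled in $L^2$. As written, your argument establishes only asymptotic convergence; the exponential rate does not follow from the steps you have.
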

\begin{proof} The proof is organized as follows. First, we demonstrate that for all $t,t_0$, $t\geq t_0$ (for which the solutions are defined) the vectors $x_1(t)$ and $q(t)=(x_2(t),\lambda(t))$ satisfy the following conditions:
\begin{itemize}
\item[C1)] $\exists \ c_1: \ \|x_1\|_{\infty,[t,\infty)}\leq c_1 (\|x_1(t)\|+\|q(t)\|) \ \forall \  t\geq t_0$

\item[C2)] $\exists \ c_2: \ \|q\|_{\infty,[t,\infty)}\leq c_2 (\|x_1(t)\|+\|q(t)\|)\ \forall \  t\geq t_0$

\item[C3)] $\exists \ c_3: \ \|x_1\|_{2,[t,\infty)}\leq c_3 (\|x_1(t)\|+\|q(t)\|) \ \forall \  t\geq t_0$,
\end{itemize}
where $c_1$,$c_2$, and $c_3$ are independent of $t$. Second, we show that
\begin{itemize}
\item[C4)] $\exists \ c_4: \ \|q\|_{2,[t,\infty)}\leq c_4 (\|x_1(t)\|+\|q(t)\|)$.
\end{itemize}
Finally, we invoke a result from \cite{Tyukin:2011},\cite{Lorea_2002} to show that C1--C4 imply exponential convergence of the observer\footnote{Lemma \ref{lem:exp} is a minor modification of Lemma 3 in \cite{Lorea_2002} in which condition (\ref{eq:norm_bounds}) is no longer required to hold in a neighborhood of the origin. Instead we assume that (\ref{eq:norm_bounds}) is satisfied along a given solution. The proof of the modified statement is identical to the one presented in \cite{Lorea_2002}. The  part of the statement of Lemma 3 in \cite{Lorea_2002} concerning local and global exponential Lyapunov stability of the origin, however, is no longer applicable to the special case considered here.}.
\begin{lem}\label{lem:exp} Let $x:\Real\rightarrow\Real^n$ be a function satisfying
\begin{equation}\label{eq:norm_bounds}
\max\{\|x\|_{2,[t,\infty)},\|x\|_{\infty,[t,\infty)}\}\leq c\|x(t)\|, \ \forall \ t\geq t_0.
\end{equation}
Then
\[
\|x(t)\|\leq ce^{1/2}e^{-\frac{t-t_1}{2c^2}}\|x(t_1)\|, \ \forall \ t\geq t_1\geq t_0.
\]
\end{lem}

{\it First part.}  We have that for all $t\geq t_1\geq t_0$
\[
0\leq\lambda(t)\leq \lambda(t_1)-\gamma \int_{t_1}^t|\bfC^{T}x_1(\tau,x_0)|d\tau.
\]
Let $P>0$ be a matrix satisfying (\ref{eq:condition_MKY}). Consider the following function
\[
V=x_1^{T} P x_1 + \|x_2\|^2_{\Gamma^{-1}}+\frac{D_v}{\gamma} \lambda^2,
\]
where $D_v$ is $\max_{t\geq t_0, \ \lambda\in[0,\lambda_0]} |\pd v(t,\lambda)/\pd \lambda|$. It is clear that
\begin{equation}\label{eq:dot_V:0}
\begin{split}
\dot{V}&\leq - x_1^{T} Q x_1+2 x_1^{T} P b \varphi^T(t)x_2 - 2x_2^{T}\bfC^{T} x_1 \varphi(t) \\
&+  2 x_1^{T} P b v(t,\lambda) - 2 D_v \lambda |\bfC^{T}x_1|\leq - x_1^{T} Q x_1\\
&-2 |\bfC^{T}x_1|(D_v\lambda-\mathrm{sign}(\bfC^{T}x_1)v(t,\lambda)).
\end{split}
\end{equation}
Noticing that $\lambda(t)\geq 0$,
\[
v(t,\lambda)=v(t,\lambda)-v(t,0)=\int_{0}^{1} \left. \frac{\pd v(t,a)}{\pd a}\right|_{a=s\lambda}ds \lambda,
\]
and consequently,
\[
|\mathrm{sign}(\bfC^{T}x_1)v(t,\lambda)|\leq D_v \lambda,
\]
we can conclude that the term $D_v\lambda-\mathrm{sign}(\bfC^{T}x_1)v(t,\lambda)$ in (\ref{eq:dot_V:0}) is always non-negative, and hence
\begin{equation}\label{eq:dot_V}
\dot{V}\leq  - x_1^{T} Q x_1.
\end{equation}
Thus C1, C2 hold. Furthermore, in view of (\ref{eq:dot_V}), one can derive that C3 holds as well.

{\it Second part.} Let us show that C4 holds. In order to do so we use the method described in \cite{Lorea_2002}. Consider the variable
\[
z=q -  \phi(t) b^{T}x_1
\]
and calculate its derivative:
\[
\dot{z}=\dot{q}-\dot{\phi}(t)b^{T}x_1 - {\phi}(t)b^{T}A x_1 - {\phi}(t)b^{T}b(\varphi(t)^{T}x_2 + v(t,\lambda)).
\]
Noticing that $v(t,\lambda)=\int_{0}^{1} \left. \frac{\pd v(t,a)}{\pd a}\right|_{a=s\lambda}ds \lambda$ we conclude that
\[
\varphi(t)^{T}x_2 + v(t,\lambda)=\phi(t)^{T}(x_2,\lambda)=\phi(t)^{T}q.
\]
Hence
\[
\begin{split}
\dot{z}=& \dot{q}-\dot{\phi}(t)b^{T}x_1 - {\phi}(t)b^{T}A x_1 - {\phi}(t)b^{T}b \phi(t)^{T}z\\
&-\phi(t)b^{T} b \phi(t)^{T} \phi(t)^T b x_1\\
=& - \phi(t)b^{T}b \phi(t)^{T} z + \chi(t,x_1),
\end{split}
\]
where
\[
\begin{split}
\chi(t,x_1)=&\dot{q}-\dot{\phi}(t) b^{T}x_1 -\\
& \phi(t)b^{T} \bfA x_1-\phi(t)b^{T} b \phi(t)^{T} \phi(t)^T b x_1.
\end{split}
\]
Recall that $\dot{q}=\mathrm{col}(-\Gamma \varphi(t)C^T x_1,-\gamma |C^{T}x_1|)$ and  that $\phi(t),\dot{\phi}(t)$ are bounded. Hence there is a constant
 $M>0$ such that
\[
\|\chi(t,x_1)\|\leq M \|x_1\|.
\]
Taking into account that
$\phi(t)$ is persistently exciting we can conclude that there are constants
$\beta_1,\beta_2$:
\[
\|z\|_{2,[t,\infty)}\leq \beta_1\|z(t)\|+ \beta_2 \|x_1\|_{2,[t,\infty)}.
\]
for all $t\geq t_0$. Given that  $\|\phi(t)
b^{T}x_1\|\leq
M_\phi\|\bfB\|\|x_1\|$ we obtain:
\[
\begin{split}
\|q\|_{2,[t,\infty)}\leq& \beta_1(\|q(t)\|+M_\phi \|\bfB\|\|x_1(t)\|) \\
&+  (M_\phi\|\bfB\|+\beta_2) \|x_1\|_{2,[t,\infty)}.
\end{split}
\]
Hence C4 holds.

{\it Third part}  follows from Lemma \ref{lem:exp}.
\end{proof}

\subsection{How tight are the estimates of regions of forward invariance?}\label{subsec:tight}

On the one hand, since Assumptions \ref{assume:stable} and
\ref{assume:unstable} are inherently conservative, our results
bear a degree of conservatism. On the other hand, if viewed as
conditions for the mere existence of (weakly) attracting sets, they
can sometimes be remarkably precise. This is illustrated with
the example below.

\begin{example}\label{example:tight}\normalfont  Consider  system
\begin{equation}\label{eq:small_gain:example:1}
\begin{split}
\dot{x}_1&=-\tau_1 x_1 + c_1 x_2\\
\dot{x}_2&=-c_2|x_1|, \ \tau_1, \ c_1, \ c_2\in\Real_{>0},
\end{split}
\end{equation}
and let us determine the values of $c_1,c_2$, and $\tau_1$ such
that the origin is a weak attractor for
(\ref{eq:small_gain:example:1}).  We will do this by invoking
Corollary \ref{cor:boundedness_unstable}. It is clear that
solutions of (\ref{eq:small_gain:example:1}) are defined for
all $t$.  Thus letting $\mathcal{D}_x=\Real$,
$\mathcal{D}_\lambda=\Real$ we can easily see that Assumption
\ref{assume:stable} holds for the first equation with
$V(x_1)=x_1^2$:
\[
\dot V\leq-2 \tau_1 V + 2 c_1 \sqrt{V}|x_2|,
\]
and Assumption \ref{assume:unstable} is satisfied for the
second equation with $\delta(|x_1|)=c_2 |x_1|=c_2\sqrt{V}$,
$\xi(|x_2|)=0$. Let us pick
\[
\psi(V)=p\sqrt{V}, \ p\in\Real_{>0},
\]
and consider
\begin{equation}\label{eq:example:discussion}
\begin{split}
&\frac{\pd \psi}{\pd V}(-2\tau_1 V + 2c_1\sqrt{V}\psi(V))\\
& \ \ \ \  +c_2\sqrt{V}=(-p\tau_1+c_1p^2 +c_2)\sqrt{V}.
\end{split}
\end{equation}
The expression above is defined for $V\in(0,\infty)$, and it is
non-positive for $c_2\leq p(\tau_1-p c_1)$.  The right-hand
side of the last inequality is maximal at $p=\tau_1/(2 c_1)$.
Thus we can conclude that
\begin{equation}\label{eq:example:discussion:1}
c_2\leq \frac{\tau_1^2}{4 c_1}
\end{equation}
ensures that (\ref{eq:example:discussion}) is non-positive for
all $V\in\Real_{>0}$. Hence, according to Corollary
\ref{cor:boundedness_unstable}, the set
\[
\Omega=\{(x_1,x_2) \ | x\in\Real, \ x_2\in \Real_{>0}, \ x_2\geq  \frac{\tau_1}{2 c_1}|x_1|\}
\]
is forward-invariant.  Moreover, solutions of
(\ref{eq:small_gain:example:1}) starting in $\Omega$ are
bounded and satisfy $\lim_{t\rightarrow\infty}x_1(t)=0$. Given
that  $x_2(t)$  is bounded, Barbalatt's lemma (applied to the
first equation) implies that
$\lim_{t\rightarrow\infty}x_2(t)=0$. Hence, the origin of
system (\ref{eq:small_gain:example:1}) satisfying condition
(\ref{eq:example:discussion:1}) is a weak attractor.

Let us now see if the attractor  persists when inequality
(\ref{eq:example:discussion:1}) does not hold. Consider an
auxiliary system:
\begin{equation}\label{eq:example:discussion:2}
\begin{split} \dot{x}_1&=-\tau_1 x_1 + c_1
x_2\\ \dot{x}_2&=-c_2 x_1, \ \tau_1, \ c_1, \ c_2\in\Real_{>0},
\end{split}
\end{equation}
and let $(x_1(t),x_2(t))$ be a non-trivial solution of
(\ref{eq:example:discussion:2}). If the roots of
$\chi(s)=s^2+\tau_1 s+c_1 c_2$ have non-zero imaginary real
parts then the sign of $x_2(t)$ will necessarily alternate.
This, however, implies that no non-trivial solutions of
(\ref{eq:small_gain:example:1}) converge to the origin. The
roots of $\chi(s)$ are real, however, only if
(\ref{eq:example:discussion:1}) holds. Therefore, in this
particular case, condition (\ref{eq:example:discussion:1}) is
not only sufficient but it is also necessary for the origin of
(\ref{eq:small_gain:example:1}) to be an attractor.

Phase curves of (\ref{eq:small_gain:example:1}) illustrating
this point are provided in Fig. \ref{fig:discussion}. The left plot
corresponds to the case when condition
(\ref{eq:example:discussion:1}) is satisfied. As we can see,
solutions are asymptotically approaching the origin
(marked by a black circle).  The right plot shows phase curves of
the system in which the value of $c_2$ is larger than
$\tau_1^2/(4c_1)$. In this case, as can be clearly seen  in the
inset at the bottom right corner, solutions  of the system do
not approach the origin asymptotically. They are lingering in
its neighborhood for a while, and then eventually escape.
\begin{figure*}[!ht]
\centering
\includegraphics[width=0.8\textwidth]{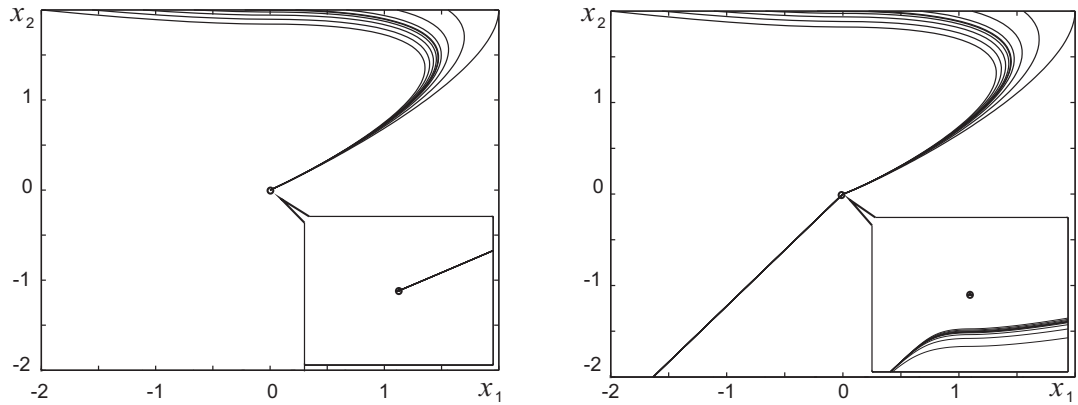}
\caption{Plots of the phase curves of system (\ref{eq:small_gain:example:1}) passing through the line $x_2(0)=2$. Parameters $\tau_1=1,c_1=1$ were fixed in all simulations, and parameter $c_2$ was varying. {Left panel:} phase curves of (\ref{eq:small_gain:example:1}) for $c_2=1/4$. Right panel: phase curves of the system for $c_2=11/40$. Insets in the bottom right corner of each plot show behavior of the phase curves within the rectangle $[-2\cdot 10^{-4},2\cdot 10^{-4}]\times[-2\cdot 10^{-4},2\cdot 10^{-4}]$, that is in the close proximity to the origin.}\label{fig:discussion}
\end{figure*}

Finally, we would also like to remark that  conditions
presented in e.g. Corollary \ref{cor:boundedness_unstable} can, in principle, be less
conservative than the ones established previously in the
framework of input-output/state analysis, cf.
\cite{SIAM_non_uniform_attractivity}. Indeed, when applied to
the same system, (\ref{eq:small_gain:example:1}), Corollary 4.1
from \cite{SIAM_non_uniform_attractivity} yields the following
upper bound for $c_2$:
\[
c_2<\frac{1}{16} \frac{\tau_1^2}{c_1},
\]
which is four times smaller than the one derived from Corollary
\ref{cor:boundedness_unstable}.

\end{example}

\section{Conclusion}\label{sec:conclusion}

In this manuscript we presented several results that are immediate consequences of our earlier published work \cite{Tyukin:SIAM:2013}. These results enabled to extend first method of Lyapunov for the analysis of asymptotic behavior of solutions in a vicinity of an equilibrium to systems in which the corresponding Jacobian has one zero eigenvalue. We showed that the fact that all other eigenvalues have negative real parts coupled with sign-definiteness condition of an associated quadratic form is sufficient to warrant that the equilibrium is a weak (Milnor) attractor. In particular, for  $\dot{z}=p(z)$ with $p:\Real^m\rightarrow\Real^m$ differentiable at least twice, let
\begin{itemize}
\item  $J$ be the Jacobian of the vector-filed $p$  at the origin,
\item  $\sigma_1\dots,\sigma_m$ be the eigenvalues of $J$ with $\sigma_1=0$ and $Re(\sigma_k)<0$, $k=2,\dots,m$,
\item  $T$ be a similarity transform such that the last row of $TJT^{-1}$ is zero
\item  $g(x)$ be the $m$-th component of $Tp(T^{-1} x)$, and
\item $G$ be the Hessian matrix of $g$.
\end{itemize}
 Then the origin is a (local) weak attractor if $G$ is sign-definite.

 Furthermore we provided analysis of convergence rates for a relevant subclass of systems with unstable attractors. We have shown that persistency of excitation plays an important role in establishing exponential convergence to the attractor. Finally, we demonstrated that conditions presented in the original work \cite{Tyukin:SIAM:2013} can be remarkably tight, at least for some example problems. Several questions, however, still remain. One of these is how (and if) the established rate of convergence may change in presence of unmodeled dynamics providing that the modulus is replaced with a dead-zone in (\ref{eq:system:observer}). Answering to these is the subject of ongoing work.


\bibliographystyle{IEEEtran}
\bibliography{Unstable_convergence_lemma}

\end{document}